\DeclareMathAlphabet\mathbfcal{OMS}{cmsy}{b}{n}
\newtheorem{Lm}{Lemma}
\newtheorem{remark}{Remark}
\newtheorem{exmp}{Example}
\newcommand{\E}[1]{\mathop{{\rm \bf E}\left\{#1\right\}}\nolimits}
\newcommand{\Diff}[1]{\mathop{{\rm \Delta}\left[#1\right]}\nolimits}
\def\url#1{\expandafter\string\csname #1\endcsname}
\begin{document}
\title{Chandrasekhar-based Maximum Correntropy Kalman Filtering with the Adaptive Kernel Size Selection}

\author{Maria V.~Kulikova
\thanks{Manuscript received ??; revised ??.
The author acknowledges the financial support of the Portuguese FCT~--- \emph{Funda\c{c}\~ao para a Ci\^encia e a Tecnologia}, through the project UID/Multi/04621/2019 of CEMAT/IST-ID, Center for Computational and Stochastic Mathematics, Instituto Superior T\'ecnico, University of Lisbon.}
\thanks{The author is with
CEMAT (Center for Computational and Stochastic Mathematics), Instituto Superior T\'{e}cnico, Universidade de Lisboa,
          Av. Rovisco Pais 1,  1049-001 LISBOA, Portugal;
          Email: maria.kulikova@ist.utl.pt}}

\markboth{PREPRINT}{}

\maketitle

\begin{abstract}
This technical note is aimed to derive the Chandrasekhar-type recursion for the maximum correntropy criterion (MCC) Kalman filtering (KF). For the classical KF, the  first Chandrasekhar difference equation was proposed at the beginning of 1970s. This is the alternative to the traditionally used Riccati recursion and it yields the so-called fast implementations known as the Morf-Sidhu-Kailath-Sayed KF algorithms. They are proved to be computationally cheap because of propagating the matrices of a smaller size than $n \times n$ error covariance matrix in the Riccati recursion. The problem of deriving the Chandrasekhar-type recursion within the MCC estimation methodology has never been raised yet in engineering literature. In this technical note, we do the first step and derive the Chandrasekhar MCC-KF estimators for the case of adaptive kernel size selection strategy, which implies a constant scalar adjusting weight. Numerical examples substantiate a practical feasibility of the newly suggested MCC-KF implementations and correctness of the presented theoretical derivations.
\end{abstract}

\begin{keywords}
Maximum correntropy, Kalman filtering, Chandrasekhar recursion, fast implementations.
\end{keywords}

\IEEEpeerreviewmaketitle

\section{Introduction}

The maximum correntropy criterion (MCC) filtering/smoothing has become an important topic for an analysis in the past few years, both for linear~\cite{2007:Liu,2014:Chen,2015:Chen,2011:Cinar,2012:Cinar,2016:Izanloo,2017:Chen,2017:Liu:KF,2017:CSL:Kulikova} and nonlinear systems~\cite{2016:Liu:EKF,2018:Kulikov:SP,2017:Liu:UKF,2017:Qin,2017:Wang,2016:Wang}. In engineering literature, the MCC Kalman-like estimators were proved to be robust with respect to outliers/impulsive noises and to outperform the classical Kalman filtering (KF) for estimation accuracy in case of non-Gaussian uncertainties in state-space models. The problem of designing the ``distributionally robust'' filtering/smoothing methods has a long history~\cite{1964:Huber}. The related problem of constructing the statistically valid uncertainty bounds has been studied in~\cite{1984:Spall,1995:Spall,2004:Maryak}. Apart from the MCC-KF methodology examined in this paper, we mention a few other strategies for detecting the outliers. These are the Huber-based and M-estimator-based KF algorithms suggested in~\cite{1977:Masreliez,2010:Hajiyev,2013:Chang}, the unknown input filtering (UIF) methodology proposed to model the unknown external excitations as unknown inputs and to derive the robust observer in~\cite{2014:Charandabi}, and many other estimation strategies. We may also note the most recent and comprehensive survey of the existed Kalman-like {\it smoothing} methods developed for the non-Gaussian state-space models in~\cite{2017:Aravkin}. In this paper, we focus on the Kalman filtering under the MCC methodology.

Previous research on the MCC-KF implementation methods has produced the Riccati recursion-based algorithms, only. However, it is worth noting here that the classical KF allows for an alternative filter mechanization suggested at the beginning of 1970s in~\cite{1973:Kailath}. It implies the so-called Chandrasekhar recursion and yields the class of the fast KF implementations known as the Morf-Sidhu-Kailath-Sayed algorithms~\cite{1974:Morf,1975:Morf,1994:Sayed}. To the best of author's knowledge, the question about possibility to derive the Chandrasekhar-type recursion under the MCC filtering strategy has never been raised before. This is the challenge to be addressed in this technical note.

The essential starting point for further discussion is to review the Chandrasekhar-type algorithms existed for the classical KF in engineering literature. The first fast KF implementations were derived for linear time-invariant systems in~\cite{1973:Kailath}. The key idea behind the Chandrasekhar-based filtering is to propagate the difference between the error covariance matrices $\Delta_k = P_{k} - P_{k-1}$ instead of updating the matrix $P_{k} \in {\mathbb R}^{n\times n}$ through the classical Riccati recursion. It is motivated by the fact that the rank of matrix $\Delta_k$ is often less than the rank of $P_{k}$, especially for time-invariant state-space models, i.e. for the systems with constant (over time) matrices characterized the model. As a result, a low-rank approximation of the difference matrix $\Delta_k$ can be utilized and the obtained filleting algorithms update the matrices of a smaller size than the number of states $n$ to be estimated. Thus, the Chandrasekhar-type algorithms are, in general, computationally cheaper than the Riccati-based KF implementations. To achieve our goal, the Riccati recursion required for propagating the covariance matrix $P_k$ should be mathematically re-formulated in terms of the difference matrix $\Delta_k$ propagation. Having done this step, one receives the Chandrasekhar-type recursion. Recall, at the first time, this problem was solved for the case of constant {\it continuous-time} systems in~\cite{1973:Kailath}. The derived differential equations turned out to be similar to a certain equation introduced by the astrophysicist S.~Chandrasekhar in 1948 for solving finite-interval Wiener-Hopf equation, and the obtained KF recursion was called of Chandrasekhar type. Almost at the same time, the problem was solved for constant {\it discrete-time} systems as well in~\cite{1974:Morf}. However, the related solution for {\it time-variant} systems is rather complicated and it was derived twenty years later in~\cite{1994:Sayed}. Nowadays, there exists a wide variety of the Chandrasekhar-based KF implementations. For instance, we may mention the robust factored-form (square-root) filtering/smoothing methods in~\cite{1975:Morf,1995:Park:smoothing} as well as the $H_{\infty}$ filtering algorithms in~\cite{1994:Hassibi,2000:Hassibi}. Meanwhile, the Chandrasekhar-type recursion under the MCC filtering methodology has never been derived, so far. In other words, all existed MCC-KF implementation methods involve the Riccati-type recursion for propagating the error covariance matrix.

For the stated problem, a number of questions arise: (i) is it possible to derive Chandrasekhar-type recursion for the Kalman-like filtering under the examined MCC methodology? (ii) If the recursion exists, does it allow for designing the related low-rank implementations for the MCC-KF estimator? In this paper, we answer positively for both questions and suggest a solution for the case of the MCC-KF adaptive kernel size selection strategy with a constant scalar adjusting weight. The results of numerical experiments substantiate the theoretical derivations presented in this paper and prove a practical feasibility of the suggested MCC-KF Chandrasekhar-type  implementations.

\section{Riccati recursion-based Maximum Correntropy Kalman Filtering}

Consider a linear discrete-time stochastic system
  \begin{align}
   x_{k+1} = & F x_{k}  + G w_{k}, \label{eq:st:1} \\
    y_k  = & H x_k + v_k , \quad k \ge 0  \label{eq:st:2}
  \end{align}
where the system matrices $F \in \mathbb R^{n\times n}$, $G \in \mathbb R^{n\times q}$ and $H \in \mathbb R^{m\times n}$ are known and constant over time. The vectors $x_k \in \mathbb R^n$ and $y_k \in \mathbb R^m$ are the unknown dynamic state and available measurements, respectively. The random variables $x_0$, $w_k$ and $v_k$ satisfy
\begin{align*}
&\E{x_0}       = \bar x_0,    &  &\E{(x_0-\bar x_0)(x_0-\bar x_0)^{\top}}  = \Pi_0, \\
&\E{w_k}       = \E{v_k} = 0, &  &\E{w_kx_0^{\top}}  = \E{v_kx_0^{\top}}  = 0,  \\
&\E{w_kv_k^{\top}}  = 0,           &  &\E{w_kw_j^{\top}} = Q\delta_{kj},   \E{v_kv_j^{\top}}  = R\delta_{kj}
\end{align*}
where the covariance matrices $Q \in \mathbb R^{q\times q}$ and $R \in \mathbb R^{m\times m}$ are known. The symbol $\delta_{kj}$ is the Kronecker delta function.

The classical KF produces the minimum {\it linear} expected mean square error (MSE) estimate $\hat x_{k|k}$ of the state vector $x_k$, given the measurements $\mathcal{Y}_0^k = \{y_0, \ldots, y_k \}$.
The estimator can be formulated in the {\it a priori} form as proposed in~\cite[Theorem~9.2.1]{2000:Kailath:book}, i.e. when the one-step ahead predicted estimate $\hat x_{k+1|k}$ ({\it a priori} estimate) is propagated as follows:
\begin{align}
\hat x_{k+1|k}  & =   F \hat x_{k|k-1}+K_{p,k}e_k, & e_k & = y_k-H\hat x_{k|k-1},  \label{KF:X:Riccati} \\
K_{p,k}  & = FP_{k|k-1}H^{\top}R_{e,k}^{-1}, & R_{e,k} & = R+HP_{k|k-1}H^{\top} \label{KF:K:Riccati}
\end{align}
where the innovations are defined as $e_k = y_k-H\hat x_{k|k-1} $ with the covariance matrix $R_{e,k} = \E{e_ke_k^{\top}}$, and $K_{k}=\E{\hat x_{k+1|k} e_k^{\top}} = FP_{k|k-1}H^T$. The matrix $P_{k|k-1}$ is the one-step ahead predicted error covariance $P_{k|k-1}=\E{ (x_{k}-\hat
x_{k|k-1})(x_{k}-\hat x_{k|k-1})^{\top}}$ propagated through the Riccati difference recursion:
\begin{equation}
P_{k+1|k}  = FP_{k|k-1}F^{\top} + GQG^{\top} - K_{p,k}R_{e,k}K_{p,k}^{\top} \label{KF:P:Riccati}
\end{equation}
with the initial values $P_{0|-1} =  \Pi_0 > 0$ and $\hat x_{0|-1}  = \bar x_0$.

Being a {\it linear} estimator, the classical KF exhibits only sub-optimal behavior in non-Gaussian settings. To enhance its estimation quality and robustness with respect to outliers (impulsive noise), the KF {\it linear} expected MSE estimation criterion has been combined with the maximum correntropy approach in~\cite{2012:Cinar,2016:Izanloo,2017:Chen}. More precisely, the concept of {\it correntropy} represents a similarity measure of two random variables~\cite{2007:Liu}. It can be used as an optimization cost in the related estimation problem as discussed in~\cite[Chapter~5]{2018:Principe:book}: an estimator of unknown state $X \in {\mathbb R}$ can be defined as a function of observations $Y \in {\mathbb R}^m$, i.e. $\hat X = g(Y)$ where $g$ is solved by maximizing the correntropy between $X$ and $\hat X$, which is defined as follows~\cite{2012:Chen}:
\begin{equation} \label{mcc:kriterion}
g_{MCC} = \mbox{arg}\max \limits_{g \in G} V(X,\hat X) = \mbox{arg}\max \limits_{g \in G} \E{k_{\sigma}\Bigl(X - g(Y)\Bigr)}
\end{equation}
where $G$ stands for the collection of all measurable functions of $Y$, $k_{\sigma}(\cdot)$ is a kernel function and  $\sigma > 0$ is the kernel size (bandwidth). One of the most popular kernel function utilized in practice is the Gaussian kernel given as follows:
\begin{equation}\label{Gauss_kernel}
k_{\sigma}(X - \hat X) = \exp \left\{ -{(X - \hat X)^2}/{(2\sigma^2)}\right\}.
\end{equation}
It is not difficult to see that the MCC cost~\eqref{mcc:kriterion} with Gaussian kernel~\eqref{Gauss_kernel} reaches its maximum if and only if $X = \hat X$.

In~\cite{2016:Izanloo}, the MCC-KF is developed by solving the following estimation problem with the Gaussian kernel:
\begin{align}
\hat x_{k|k} & = \mbox{arg}\max J(k), \\
 J(k)  &= k_{\sigma}(\|\hat x_{k|k}-F\hat x_{k-1|k-1}\|)  + k_{\sigma}(\|y_k-H\hat x_{k|k}\|).
\end{align}

Next, a fixed point rule (with one iterate, only) has been used for solving (with respect to $\hat x_{k|k}$) the resulted nonlinear equation arisen in the optimization problem above. It results to the following filtering recursion for the state~\cite[p.~503]{2016:Izanloo}:
\begin{equation}
\hat x_{k|k}  =  F\hat x_{k-1|k-1}  + K_k^{\lambda}(y_k - H\hat x_{k|k-1}) \label{eq:result}
\end{equation}
where the gain matrix is proved to be computed as follows~\cite{2016:Izanloo}: 
$K_k^{\lambda} = \lambda_k \left( P_{k|k-1}^{-1}+\lambda_k H^{\top} R^{-1} H \right)^{-1}H^{\top} R^{-1}$ and the scalar adjusting weight $\lambda_k$ is given by
\begin{equation}
\lambda_{k}  = \frac{k_{\sigma}(\|y_k-H\hat x_{k|k-1}\|_{R^{-1}})}{k_{\sigma}(\|\hat x_{k|k-1}-F\hat x_{k-1|k-1}\|_{P_{k|k-1}^{-1}})}.  \label{eq:lambda}
\end{equation}

Finally, the recursion for the state estimate in~\eqref{eq:result} is utilized
 with a symmetric Joseph stabilized equation existed for the classical KF for the error covariance matrix $P_{k|k}$ calculation; see~\cite{2006:Simon:book,2015:Grewal:book}. The resulted computational method was called the MCC-KF estimator and it is summarized in~\cite[p.~503]{2016:Izanloo}. Further, the estimation quality of the original MCC-KF method has been boosted by deriving the mathematically equivalent formulas for the gain matrix and error covariance, which are similar to the classical KF equations presented in~\cite[pp.~128-129]{2006:Simon:book}.
 This approach yields the so-called improved MCC-KF (IMCC-KF) estimator suggested in~\cite{2017:CSL:Kulikova}. It consists of the following steps.

\textsc{Time Update} ($k=1, \ldots, K$). At this stage, the one-step ahead predicted ({\it a priori}) estimate, $\hat x_{k|k-1}$, is computed together with the corresponding error covariance matrix $P_{k|k-1}$ as follows:
\begin{align}
\hat x_{k|k-1} & = F\hat x_{k-1|k-1}, \label{mcckf:p:X} \\
P_{k|k-1} & = FP_{k-1|k-1}F^{\top}+GQG^{\top}. \label{mcckf:p:P}
\end{align}

\textsc{Measurement Update}  ($k=1, \ldots, K$).  The correction step is called the measurement update where the {\it a posteriori} estimate $\hat x_{k|k}$ is calculated together with the corresponding error covariance matrix $P_{k|k}$ as follows:
\begin{align}
K_{k}^{\lambda} & = \lambda_k P_{k|k-1}H^{\top}[R_{e,k}^{\lambda}]^{-1}\!\!,  \! \! \! & R_{e,k}^{\lambda} & = \lambda_kHP_{k|k-1}H^{\top}+R, \label{mcckf:f:K} \\
\hat x_{k|k} & =    \hat x_{k|k-1}+K_{k}^{\lambda}e_k,\! \! \!           & e_k & = y_k-H\hat x_{k|k-1},   \label{mcckf:f:X} \\
P_{k|k}  & = (I - K^{\lambda}_{k}H)P_{k|k-1} && \label{mcckf:f:P}
\end{align}
where the scaling (inflation) parameter $\lambda_{k}$ is computed by~\eqref{eq:lambda}.

The IMCC-KF estimator can be re-formulated in the {\it a priori} form similar to the classical KF equations~\eqref{KF:X:Riccati}~-- \eqref{KF:P:Riccati}. Having substituted~\eqref{mcckf:f:X} into~\eqref{mcckf:p:X}, we get the recursion for the one-step ahead predicted ({\it a priori}) estimate $\hat x_{k+1|k}$ as follows:
\begin{align}
\hat x_{k+1|k}      & =   F \hat x_{k|k-1}+K_{p,k}^{\lambda}e_k,  \label{MCCKF:X:Riccati} \\
K_{p,k}^{\lambda}   & = \lambda_kFP_{k|k-1}H^{\top}[R_{e,k}^{\lambda}]^{-1}   \label{MCCKF:K:Riccati}
\end{align}
where $R_{e,k}^{\lambda}$ is defined by~\eqref{mcckf:f:K} and the {\it a priori} error covariance $P_{k+1|k}$ is computed via the following Riccati-type recursion:
\begin{equation}
P_{k+1|k}  = FP_{k|k-1}F^{\top}+GQG^{\top} - \frac{1}{\lambda_k}K_{p,k}^{\lambda}R_{e,k}^{\lambda}[K_{p,k}^{\lambda}]^{\top}. \label{MCCKF:P:Riccati}
\end{equation}

Indeed, having substituted formula~\eqref{mcckf:f:P} into~\eqref{mcckf:p:P} and taking into account the symmetric form of any covariance matrix and the fact  that $K^{\lambda}_{p,k} = FK^{\lambda}_{k}$, we prove~\eqref{MCCKF:P:Riccati} as follows:
\begin{align*}
P_{k+1|k} & = F(I - K^{\lambda}_{k}H)P_{k|k-1}F^{\top}+GQG^{\top} \\
& = FP_{k|k-1}F^{\top} + GQG^{\top} - FK^{\lambda}_{k}HP_{k|k-1}F^{\top}  \\
& = FP_{k|k-1}F^{\top} + GQG^{\top} - K^{\lambda}_{p,k}(FP_{k|k-1}H^{\top})^{\top} \\
& = FP_{k|k-1}F^{\top} + GQG^{\top} - \frac{1}{\lambda_k}K^{\lambda}_{p,k}R_{e,k}^{\lambda}[K^{\lambda}_{p,k}]^{\top}
\end{align*}

Although $\lambda_k$ is a scalar value, it is preferable to avoid the division. Thus, equations~\eqref{MCCKF:X:Riccati}~-- \eqref{MCCKF:P:Riccati} can be written as follows:
\begin{align}
\hat x_{k+1|k} & =   F \hat x_{k|k-1} + \lambda_kK_{p,k}e_k, \label{MCCKF:X:Riccati1} \\
K_{p,k} & = FP_{k|k-1}H^{\top}[R_{e,k}^{\lambda}]^{-1} \label{MCCKF:K:Riccati1} \\
P_{k+1|k}      & = FP_{k|k-1}F^{\top}+GQG^{\top} - \lambda_kK_{p,k}R_{e,k}^{\lambda}K_{p,k}^{\top}. \label{MCCKF:P:Riccati1}
\end{align}

\section{Chandrasekhar-based Maximum Correntropy Kalman Filtering} \label{sec:derivation}

For a better presentation of a new material, we introduce the backward difference operator $\Diff{\cdot}$ that means $\Diff{A_k} = A_k - A_{k-1}$ for any matrix $A$. The Riccati-based filtering implies the error covariance matrix $P_{k+1|k} \in {\mathbb R}^{n\times n}$ propagation through the classical KF recursion~\eqref{KF:P:Riccati} as well as the IMCC-KF equation~\eqref{MCCKF:P:Riccati1}. The key idea of the Chandrasekhar recursion-based filtering is to propagate the difference $\Diff{P_{k+1|k}} = P_{k+1|k} - P_{k|k-1}$ instead of $P_{k+1|k}$. It is motivated by the fact that the rank of matrix $\Diff{P_{k+1|k}} \in {\mathbb R}^{n\times n}$ is often less than $n$, which is a number of states to be estimated by the filter, especially for time-invariant state-space models (i.e. with constant system matrices). Hence, the difference matrix $\Diff{P_{k+1|k}}$ implies a low-rank approximation and, thus, the related filtering algorithms can be derived for propagating the resulted lower rank factors. More precisely, the fast Morf-Sidhu-Kailath-Sayed algorithms derived for the classical KF are based on a factorization (non-uniquely defined) of symmetric indefinite matrix $\Diff{P_{1|0}}$ in the form $\Diff{P_{1|0}} = L_0M_0L_0^{\top}$ where $L_0 \in {\mathbb R}^{n\times\alpha}$, $M_0 \in {\mathbb R}^{\alpha\times\alpha}$, and $\alpha$ is called  the displacement rank. Similar, for the IMCC-KF recursion in~\eqref{MCCKF:P:Riccati1}, we have
\begin{align*}
\alpha & = {\rm rank}\Diff{P_{1|0}} = {\rm rank} (P_{1|0}-P_{0|-1}) \\
       & = {\rm rank} (F\Pi_0F^{\top} + GQG^{\top} - \lambda_0K_{p,0}R_{e,0}^{\lambda}K_{p,0}^{\top} - \Pi_0).
\end{align*}

It is important to acknowledge that the mentioned factorization is performed only once, i.e. at the initial step of any Chandrasekhar-based filtering method. When the resulted factors $L_0 \in {\mathbb R}^{n\times\alpha}$ and $M_0 \in {\mathbb R}^{\alpha\times\alpha}$ are defined, they are propagated instead of entire matrices $P_{k+1|k}$ and/or $\Diff{P_{k+1|k}}$. In general, the displacement rank $\alpha \le n$ and, hence, the Chandrasekhar-type algorithms for propagating $M_k \in {\mathbb R}^{\alpha\times\alpha}$ and $L_k \in {\mathbb R}^{n\times\alpha}$ are computationally cheaper than the Riccati-based implementations that update a full rank error covariance matrix $P_{k+1|k}$ of size $n$. The computational complexity is shown to be reduced from $O(n^3)$ related to the Riccati recursion to $O(n^2\alpha)$ related to the Chandrasekhar recursion per iteration~\cite{1974:Morf,1994:Sayed}. The methods for implementing the underlying low-rank approximation of $\Diff{P_{1|0}}$ will be discussed at the end of this section. The first aim of this section is to derive the Chandrasekhar recursion for the IMCC-KF filtering.  In other words, the corresponding IMCC-KF Riccati-type recursion in~\eqref{MCCKF:P:Riccati1} should be re-formulated in terms of the matrices $\Diff{P_{k+1|k}}$, i.e. equation~\eqref{MCCKF:P:Riccati1} and the newly derived Chandrasekhar-based formulas should be mathematically equivalent.  We prove the following theoretical result.

\begin{Lm} \label{lemma:1} The IMCC-KF Riccati-based filtering formulas~\eqref{MCCKF:X:Riccati1}~-- \eqref{MCCKF:P:Riccati1} with a constant adjusting parameter $\lambda$ are equivalent to  the following Chandrasekhar-type recursions:
\begin{align}
\!\!\!\!\!\Diff{P_{k+1|k}} & = \Bigl(F-\lambda K_{p,k}H\Bigr)\Bigl(\Diff{P_{k|k-1}}  \nonumber \\
 &  \phantom{=} +\lambda \Diff{P_{k|k-1}}H^{\top} [R_{e,k-1}^{\lambda}]^{-1} H\Diff{P_{k|k-1}}\Bigr)   \nonumber \\
 & \phantom{=} \times \Bigl(F-\lambda K_{p,k}H\Bigr)^{\top}, \label{lemma:1} \\
\Diff{P_{k+1|k}}  & =  \Bigl(F-\lambda K_{p,k-1}H\Bigr)\Bigl(\Diff{P_{k|k-1}}  \nonumber \\
 &  \phantom{=} -\lambda  \Diff{P_{k|k-1}}H^{\top} [R_{e,k}^{\lambda}]^{-1} H\Diff{P_{k|k-1}}\Bigr)   \nonumber \\
 & \phantom{=} \times \Bigl(F-\lambda K_{p,k-1}H\Bigr)^{\top}. \label{lemma:2}
\end{align}
\end{Lm}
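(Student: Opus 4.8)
\emph{Proof plan.} The plan is to obtain both Chandrasekhar recursions~\eqref{lemma:1} and~\eqref{lemma:2} from the Riccati equation~\eqref{MCCKF:P:Riccati1} by differencing a conveniently rewritten form of it; the state recursion~\eqref{MCCKF:X:Riccati1}~-- \eqref{MCCKF:K:Riccati1} is left untouched, and the converse direction is immediate because $P_{k+1|k}=P_{k|k-1}+\Diff{P_{k+1|k}}$ telescopes from $P_{0|-1}=\Pi_0$ once $\Diff{P_{1|0}}$ is computed directly. Throughout, put $\Phi_k:=F-\lambda K_{p,k}H$ and exploit that $\lambda$, $F$, $G$, $Q$, $H$, $R$ are all constant.

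First, I would recast~\eqref{MCCKF:P:Riccati1} in the \emph{one-sided} form
\begin{equation*}
P_{k+1|k}=FP_{k|k-1}\Phi_k^{\top}+GQG^{\top},
\end{equation*}
which holds since $\lambda K_{p,k}R_{e,k}^{\lambda}K_{p,k}^{\top}=\lambda FP_{k|k-1}H^{\top}K_{p,k}^{\top}$ by~\eqref{MCCKF:K:Riccati1}; it is this factored version, not the symmetric Joseph-type or the original three-term one, whose backward difference collapses cleanly. Next I would compute the gain increment. Since $\Diff{R_{e,k}^{\lambda}}=\lambda H\,\Diff{P_{k|k-1}}\,H^{\top}$, substituting $P_{k|k-1}=P_{k-1|k-2}+\Diff{P_{k|k-1}}$ into~\eqref{MCCKF:K:Riccati1} and using the resolvent identity $[R_{e,k}^{\lambda}]^{-1}-[R_{e,k-1}^{\lambda}]^{-1}=-[R_{e,k-1}^{\lambda}]^{-1}\,\Diff{R_{e,k}^{\lambda}}\,[R_{e,k}^{\lambda}]^{-1}$ leads, after a short calculation, to
\begin{equation*}
\Diff{K_{p,k}}=\Phi_{k-1}\,\Diff{P_{k|k-1}}\,H^{\top}[R_{e,k}^{\lambda}]^{-1},
\end{equation*}
hence to $\Phi_k=\Phi_{k-1}\Psi$ with $\Psi:=I-\lambda\,\Diff{P_{k|k-1}}\,H^{\top}[R_{e,k}^{\lambda}]^{-1}H$. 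By the Woodbury identity and $R_{e,k}^{\lambda}-\lambda H\,\Diff{P_{k|k-1}}\,H^{\top}=R_{e,k-1}^{\lambda}$, the factor $\Psi$ is invertible with $\Psi^{-1}=I+\lambda\,\Diff{P_{k|k-1}}\,H^{\top}[R_{e,k-1}^{\lambda}]^{-1}H$, and $\Psi\,\Diff{P_{k|k-1}}$ is symmetric because $\Diff{P_{k|k-1}}$ and $R_{e,k}^{\lambda}$ are.

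With these identities in hand I would subtract the one-sided Riccati relation at indices $k$ and $k-1$, so that the $GQG^{\top}$ terms cancel and $\Diff{P_{k+1|k}}=F\bigl(P_{k|k-1}\Phi_k^{\top}-P_{k-1|k-2}\Phi_{k-1}^{\top}\bigr)$; I would then add and subtract $P_{k|k-1}\Phi_{k-1}^{\top}$ inside the bracket, substitute the transpose of the gain-increment relation together with $\Phi_k-\Phi_{k-1}=-\lambda\,\Diff{K_{p,k}}\,H$, and use $F\bigl(I-\lambda P_{k|k-1}H^{\top}[R_{e,k}^{\lambda}]^{-1}H\bigr)=\Phi_k$ to reach the compact recursion
\begin{equation*}
\Diff{P_{k+1|k}}=\Phi_k\,\Diff{P_{k|k-1}}\,\Phi_{k-1}^{\top}.
\end{equation*}
From here, inserting $\Phi_k=\Phi_{k-1}\Psi$ into the left factor and expanding $\Psi\,\Diff{P_{k|k-1}}$ should reproduce~\eqref{lemma:2}, whereas inserting $\Phi_{k-1}^{\top}=\Psi^{-\top}\Phi_k^{\top}$ into the right factor and expanding $\Diff{P_{k|k-1}}\,\Psi^{-\top}$ with the Woodbury form of $\Psi^{-1}$ should reproduce~\eqref{lemma:1}; the symmetry of $\Diff{P_{k+1|k}}$ and the mutual consistency of~\eqref{lemma:1}~-- \eqref{lemma:2} then follow from the symmetry of $\Psi\,\Diff{P_{k|k-1}}$.

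The step I expect to be most delicate is the index bookkeeping on the innovation covariance: $[R_{e,k}^{\lambda}]^{-1}$ is the factor that surfaces in the gain increment, whereas $[R_{e,k-1}^{\lambda}]^{-1}$ enters only through the Woodbury inversion of $\Psi$, and the two target recursions differ precisely in this choice and in whether $\Phi_k$ or $\Phi_{k-1}$ sits on the outside. One also has to recognize at the outset that it is the one-sided form of~\eqref{MCCKF:P:Riccati1} whose difference telescopes, and the constancy of $\lambda$ is used essentially --- a time-varying $\lambda_k$ would break the structural match between the recursions at $k$ and $k-1$ on which the whole argument rests.
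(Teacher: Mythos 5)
Your proposal is correct, and it takes a genuinely different route from the paper. The paper works directly with the three-term Riccati recursion~\eqref{MCCKF:P:Riccati1}: it computes $\Diff{K_{p,k}R_{e,k}^{\lambda}}=F\Diff{P_{k|k-1}}H^{\top}$, extracts $\Diff{K_{p,k}R_{e,k}^{\lambda}K_{p,k}^{\top}}$ by right-multiplying that identity by $K_{p,k}^{\top}$ and $K_{p,k-1}^{\top}$ and summing, and then reaches each of~\eqref{lemma:1} and~\eqref{lemma:2} by a separate, fairly long term-by-term substitution of the recursions for $K_{p,k}$ and $R_{e,k}^{\lambda}$ followed by repeated use of $[R_{e,k}^{\lambda}]^{-1}R_{e,k-1}^{\lambda}=I-\lambda[R_{e,k}^{\lambda}]^{-1}H\Diff{P_{k|k-1}}H^{\top}$ (the derivation of~\eqref{lemma:1} is only sketched there). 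You instead pass to the one-sided form $P_{k+1|k}=FP_{k|k-1}\Phi_k^{\top}+GQG^{\top}$ with $\Phi_k=F-\lambda K_{p,k}H$, obtain the clean gain increment $\Diff{K_{p,k}}=\Phi_{k-1}\Diff{P_{k|k-1}}H^{\top}[R_{e,k}^{\lambda}]^{-1}$ (I verified this; it is equivalent to the paper's~\eqref{proof:Kpk}), and arrive at the compact intermediate identity $\Diff{P_{k+1|k}}=\Phi_k\Diff{P_{k|k-1}}\Phi_{k-1}^{\top}$, from which both target recursions drop out by inserting $\Phi_k=\Phi_{k-1}\Psi$ on one side or $\Phi_{k-1}^{\top}=\Psi^{-\top}\Phi_k^{\top}$ on the other, with $\Psi=I-\lambda\Diff{P_{k|k-1}}H^{\top}[R_{e,k}^{\lambda}]^{-1}H$ and $\Psi^{-1}=I+\lambda\Diff{P_{k|k-1}}H^{\top}[R_{e,k-1}^{\lambda}]^{-1}H$ (both of which check out, the latter via $R_{e,k-1}^{\lambda}=R_{e,k}^{\lambda}-\lambda H\Diff{P_{k|k-1}}H^{\top}$). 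Your approach buys a unified derivation of the two recursions from a single symmetric-looking identity, makes the role of the closed-loop matrices $\Phi_k$, $\Phi_{k-1}$ transparent, and is closer in spirit to the classical Chandrasekhar derivations of Kailath--Sayed--Hassibi; the paper's approach buys a derivation that stays entirely within the quantities already displayed in its algorithm listings ($K_{p,k}R_{e,k}^{\lambda}$ and $R_{e,k}^{\lambda}$) at the cost of a longer expansion. Both correctly exploit the constancy of $\lambda$ in exactly the same place, namely in cancelling the $GQG^{\top}$ terms and matching the structure of the $k$ and $k-1$ updates.
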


\begin{proof} First, from expression~\eqref{mcckf:f:K} for $R_{e,k}^{\lambda}$ we derive
\[
\Diff{R_{e,k}^{\lambda}}  = \lambda H\Diff{P_{k|k-1}}H^{\top}
\]
and, hence
\begin{equation}
R_{e,k-1}^{\lambda}  =  R_{e,k}^{\lambda} - \Diff{R_{e,k}^{\lambda}} = R_{e,k}^{\lambda} - \lambda H\Diff{P_{k|k-1}}H^{\top}. \label{proof:Rekminus1}
\end{equation}
Next, for the gain matrix $K_{p,k}$ in~\eqref{MCCKF:K:Riccati1}, we get
\begin{align}
\Diff{K_{p,k}R_{e,k}^{\lambda}}  & = K_{p,k}R_{e,k}^{\lambda} - K_{p,k-1}R_{e,k-1}^{\lambda} \nonumber \\
& = F\Diff{P_{k|k-1}}H^{\top} \label{proof:new}
\end{align}
and, hence
\begin{equation}
K_{p,k} = \left[K_{p,k-1}R_{e,k-1}^{\lambda} + F\Diff{P_{k|k-1}}H^{\top}\right] \left[R_{e,k}^{\lambda} \right]^{-1}. \label{proof:Kpk}
\end{equation}

From the IMCC-based Riccati-type recursion in~\eqref{MCCKF:P:Riccati1}, we obtain
\begin{align}
\Diff{P_{k+1|k}} & = P_{k+1|k} - P_{k|k-1} = F\Diff{P_{k|k-1}}F^{\top}  \nonumber \\
                 & \phantom{=} - \lambda K_{p,k}R_{e,k}^{\lambda}K_{p,k}^{\top} + \lambda K_{p,k-1}R_{e,k-1}^{\lambda}K_{p,k-1}^{\top} \nonumber \\
                 & = F\Diff{P_{k|k-1}}F^{\top}  - \lambda \Diff{K_{p,k}R_{e,k}^{\lambda}K_{p,k}^{\top}}. \label{proof:1}
\end{align}

To derive the expression for the last term in equation~\eqref{proof:1}, i.e. for $\Diff{K_{p,k}R_{e,k}^{\lambda}K_{p,k}^{\top}}$, one multiplies formula~\eqref{proof:new} by $K_{p,k}^{\top}$ and $K_{p,k-1}^{\top}$ values, respectively. Thus, we have
\begin{align*}
\Diff{K_{p,k}R_{e,k}^{\lambda}}K_{p,k}^{\top} & = F\Diff{P_{k|k-1}}H^{\top}K_{p,k}^{\top} \\
 & = K_{p,k}R_{e,k}^{\lambda}K_{p,k}^{\top}-K_{p,k-1}R_{e,k-1}^{\lambda}K_{p,k}^{\top}, \\
\Diff{K_{p,k}R_{e,k}^{\lambda}}K_{p,k-1}^{\top} & = F\Diff{P_{k|k-1}}H^{\top}K_{p,k-1}^{\top} \\
 & = K_{p,k}R_{e,k}^{\lambda}K_{p,k-1}^{\top}-K_{p,k-1}R_{e,k-1}^{\lambda}K_{p,k-1}^{\top}
\end{align*}
and, next, we summarize
\begin{align*}
& \Diff{K_{p,k}R_{e,k}^{\lambda}}K_{p,k}^{\top} + \Diff{K_{p,k}R_{e,k}^{\lambda}}K_{p,k-1}^{\top} \\
& = \Diff{K_{p,k}R_{e,k}^{\lambda}K_{p,k}^{\top}}  + K_{p,k}R_{e,k}^{\lambda}K_{p,k-1}^{\top} - K_{p,k-1}R_{e,k-1}^{\lambda}K_{p,k}^{\top} \\
& = F\Diff{P_{k|k-1}}H^{\top}K_{p,k}^{\top}+F\Diff{P_{k|k-1}}H^{\top}K_{p,k-1}^{\top}.
\end{align*}
Finally,
\begin{align}
\Diff{K_{p,k}R_{e,k}^{\lambda}K_{p,k}^{\top}} & = F\Diff{P_{k|k-1}}H^{\top}K_{p,k}^{\top} \nonumber \\
& +F\Diff{P_{k|k-1}}H^{\top}K_{p,k-1}^{\top} \label{proof:eq2} \\
& +K_{p,k-1}R_{e,k-1}^{\lambda}K_{p,k}^{\top}- K_{p,k}R_{e,k}^{\lambda}K_{p,k-1}^{\top}.  \nonumber
\end{align}

Having substituted~\eqref{proof:eq2} into~\eqref{proof:1}, we obtain
\begin{align}
\Diff{P_{k+1|k}} & =  F\Diff{P_{k|k-1}}F^{\top} - \lambda F\Diff{P_{k|k-1}}H^{\top}K_{p,k}^{\top} \nonumber \\
 & - \lambda F\Diff{P_{k|k-1}}H^{\top}K_{p,k-1}^{\top} - \lambda K_{p,k-1}R_{e,k-1}^{\lambda}K_{p,k}^{\top}  \nonumber \\
 & + \lambda K_{p,k}R_{e,k}^{\lambda}K_{p,k-1}^{\top}. \label{lemma:final:P}
\end{align}

The required formulas~\eqref{lemma:1}, \eqref{lemma:2} are both derived from equation~\eqref{lemma:final:P} by substituting the related recursions for $K_{p,k}$ and $R_{e,k}$ and, then, by expanding the terms in the resulted equation. Indeed, we prove~\eqref{lemma:2} by taking into account that $R_{e,k}$ is a symmetric matrix and by substituting~\eqref{proof:new}, \eqref{proof:Kpk}  into~\eqref{lemma:final:P} as follows:
\begin{align}
\Diff{P_{k+1|k}} & =  F\Diff{P_{k|k-1}}F^{\top} \nonumber \\
 & - \lambda F\Diff{P_{k|k-1}}H^{\top}  \left[R_{e,k}^{\lambda} \right]^{-1} R_{e,k-1}^{\lambda} K_{p,k-1}^{\top} \nonumber  \\
 & - \lambda F\Diff{P_{k|k-1}}H^{\top}  \left[R_{e,k}^{\lambda} \right]^{-1} H\Diff{P_{k|k-1}}F^{\top}  \nonumber \\
 & - \lambda F\Diff{P_{k|k-1}}H^{\top}K_{p,k-1}^{\top} - \lambda K_{p,k-1}R_{e,k-1}^{\lambda}K_{p,k}^{\top} \nonumber \\
 & + \lambda \left[K_{p,k-1}R_{e,k-1}^{\lambda} + F\Diff{P_{k|k-1}}H^{\top}\right] K_{p,k-1}^{\top} \nonumber \\
 & =  F\Diff{P_{k|k-1}}F^{\top} \nonumber \\
 & - \lambda F\Diff{P_{k|k-1}}H^{\top}  \left[R_{e,k}^{\lambda} \right]^{-1} R_{e,k-1}^{\lambda} K_{p,k-1}^{\top} \nonumber  \\
 & - \lambda F\Diff{P_{k|k-1}}H^{\top}  \left[R_{e,k}^{\lambda} \right]^{-1} H\Diff{P_{k|k-1}}F^{\top} \nonumber  \\
 & - \lambda K_{p,k-1}R_{e,k-1}^{\lambda}  \left[R_{e,k}^{\lambda} \right]^{-1} R_{e,k-1}^{\lambda}K_{p,k-1}^{\top} \nonumber  \\
 & - \lambda K_{p,k-1}R_{e,k-1}^{\lambda}  \left[R_{e,k}^{\lambda} \right]^{-1}  H\Diff{P_{k|k-1}}F^{\top} \nonumber \\
 & + \lambda K_{p,k-1}R_{e,k-1}^{\lambda} K_{p,k-1}^{\top}. \label{proof:last:1}
\end{align}

Recall, the matrices $R_{e,k}$ and $R_{e,k-1}$ are symmetric as well as the matrix $\Diff{P_{k|k-1}}$. From equation~\eqref{proof:Rekminus1} we have
\begin{equation}
\left[R_{e,k}^{\lambda} \right]^{-1} R_{e,k-1}^{\lambda} = (I - \lambda \left[R_{e,k}^{\lambda} \right]^{-1}  H\Diff{P_{k|k-1}}H^{\top}).
\label{proof:last}
\end{equation}

Having substituted~\eqref{proof:last} into~\eqref{proof:last:1}, we obtain
\begin{align*}
 & \Diff{P_{k+1|k}}  =  F\Diff{P_{k|k-1}}F^{\top} \nonumber \\
 & - \lambda F\Diff{P_{k|k-1}}H^{\top} \left(I - \lambda \left[R_{e,k}^{\lambda} \right]^{-1}  H\Diff{P_{k|k-1}}H^{\top}\right) K_{p,k-1}^{\top}  \\
 & - \lambda F\Diff{P_{k|k-1}}H^{\top}  \left[R_{e,k}^{\lambda} \right]^{-1} H\Diff{P_{k|k-1}}F^{\top}  \\
 & - \lambda K_{p,k-1}R_{e,k-1}^{\lambda}  \left(I - \lambda \left[R_{e,k}^{\lambda} \right]^{-1}  H\Diff{P_{k|k-1}}H^{\top}\right)K_{p,k-1}^{\top} \\
 & - \lambda K_{p,k-1} \left(I - \lambda H\Diff{P_{k|k-1}}H^{\top}\left[R_{e,k}^{\lambda} \right]^{-1} \right) H\Diff{P_{k|k-1}}F^{\top} \\
 & + \lambda K_{p,k-1}R_{e,k-1}^{\lambda} K_{p,k-1}^{\top}.
\end{align*}

Finally, having substituted~\eqref{proof:last} one more time into equation above and, next, having collected the similar terms, we arrive at~\eqref{lemma:2}. The same  approach is used for deriving~\eqref{lemma:1}. The difference with the derivation above is in the replacement of the term $K_{p,k-1}$ by $K_{p,k}$ via the corresponding recursion instead of avoiding the terms $K_{p,k}$ in equation~\eqref{lemma:final:P} and replacing them by $K_{p,k-1}$.
\end{proof}

\begin{remark}
As can be seen, Lemma~1 and, hence, the first Chandrasekhar MCC-KF-type recursions are proved for a constant scalar adjusting parameter $\lambda$. We stress that there exist some adaptive kernel size selection strategies for $\sigma_k$ that yield a constant adjusting weight $\lambda$. A particular example of such adaptive selection rules can be found in~\cite{2016:Izanloo} and the resulted estimator is successfully applied for solving practical application in~\cite{2017:Yang,2018:Yang}. Thus, the new theoretical result in Lemma~1 has a practical interest, although it is restricted to a constant adjusting weight case. The derivation of the Chandrasekhar recursion for a general case of time-varying $\lambda_k$ is complicated for the MCC-KF estimators. This is an open question for a future research.
\end{remark}

Now, we are ready to propose the first Chandrasekhar-based IMCC-KF implementations. Let's consider the first recursion in Lemma~1, i.e. equation~\eqref{lemma:1}. Recall, the goal is to propagate the low rank factors $L_k \in {\mathbb R}^{n\times\alpha}$ and $M_k \in {\mathbb R}^{\alpha \times\alpha}$ of the difference matrix $\Diff{P_{k+1|k}} = P_{k+1|k} - P_{k|k-1}$ where $\Diff{P_{k+1|k}}  = L_kM_kL_k^{\top}$ instead of the full matrix $P_{k+1|k} \in {\mathbb R}^{n\times n}$. Taking into account the required factorization, one may express equation~\eqref{lemma:1} as follows:
\begin{align*}
&\Diff{P_{k+1|k}}  = L_kM_kL_k^{\top} = \Bigl(F-\lambda K_{p,k}H\Bigr)\Bigl(\Diff{P_{k|k-1}}  \\
 &  \phantom{=} +\lambda \Diff{P_{k|k-1}}H^{\top} [R_{e,k-1}^{\lambda}]^{-1} H\Diff{P_{k|k-1}}\Bigr) \\
 & \phantom{=} \times \Bigl(F-\lambda K_{p,k}H\Bigr)^{\top}  = \Bigl(F-\lambda K_{p,k}H\Bigr)\Bigl(L_{k-1}M_{k-1}L_{k-1}^{\top}  \\
 & \phantom{=} +\lambda L_{k-1}M_{k-1}L_{k-1}^{\top}H^{\top} [R_{e,k-1}^{\lambda}]^{-1} HL_{k-1}M_{k-1}L_{k-1}^{\top}\Bigr) \\
 & \phantom{=} \times \Bigl(F-\lambda K_{p,k}H\Bigr)^{\top}  = \Bigl(F-\lambda K_{p,k}H\Bigr)L_{k-1} \\
 & \phantom{=} \times\Bigl(M_{k-1} + \lambda M_{k-1}L_{k-1}^{\top}H^{\top} [R_{e,k-1}^{\lambda}]^{-1} HL_{k-1}M_{k-1}\Bigr) \\
 & \phantom{=} \times L_{k-1}^{\top}\Bigl(F-\lambda K_{p,k}H\Bigr)^{\top}.
\end{align*}

Having compared both sides of the resulted equality, we conclude
\begin{align}
L_k & :=\Bigl(F-\lambda K_{p,k}H\Bigr)L_{k-1}, \label{fast1:Lk} \\
M_k & := M_{k-1} + \lambda M_{k-1}L_{k-1}^{\top}H^{\top} [R_{e,k-1}^{\lambda}]^{-1} HL_{k-1}M_{k-1}. \label{fast1:Mk}
\end{align}

The last step is to express $K_{p,k}$ and $R_{e,k}$ in terms of the factors $L_k$ and $M_k$. From formula~\eqref{proof:Rekminus1}, we have
\begin{align}
R_{e,k}^{\lambda} & =  R_{e,k-1}^{\lambda} + \lambda H\Diff{P_{k|k-1}}H^{\top} \nonumber \\
& =  R_{e,k-1}^{\lambda} + \lambda HL_{k-1}M_{k-1}L_{k-1}^{\top}H^{\top}. \label{fast1:Rek}
\end{align}

At the same way, from equation~\eqref{proof:Kpk} we obtain
\begin{align}
K_{p,k} & = \left[K_{p,k-1}R_{e,k-1}^{\lambda} + F\Diff{P_{k|k-1}}H^{\top}\right] \left[R_{e,k}^{\lambda} \right]^{-1} \nonumber \\
& \left[K_{p,k-1}R_{e,k-1}^{\lambda} + FL_{k-1}M_{k-1}L_{k-1}^{\top}H^{\top}\right] \left[R_{e,k}^{\lambda} \right]^{-1} \label{fast1:Kpk}
\end{align}

Having collected equations~\eqref{fast1:Lk}~-- \eqref{fast1:Kpk} and formula~\eqref{MCCKF:X:Riccati1} used for computing the state estimate, the first Chandrasekhar recursion-based IMCC-KF implementation is designed. For readers' convenience, it is summarized in the form of pseudo-code in Algorithm~1.

\begin{codebox}
\Procname{{\bf Algorithm 1}. $\proc{Chandrasekhar IMCC-KF}$ based on recursion~\eqref{lemma:1}}
\zi \textsc{Initialization:}($k=0$)
\li \>Set $x_{0|-1} = \bar x_0$, $P_{0|-1}=\Pi_0$;
\li \>Compute $R_{e,0}^{\lambda} = R +\lambda H\Pi_0H^{\top}$, $K_{p,0} = F\Pi_0 H^{\top}[R_{e,0}^{\lambda}]^{-1}$;
\li \>Find $\Diff{P_{1|0}} \!= \!F\Pi_0F^{\top}\!+\!GQG^{\top}\! - \lambda K_{p,0}R_{e,0}^{\lambda}K_{p,0}^{\top}\!  - \!\Pi_0$;
\li \>Factorize $\Diff{P_{1|0}} = L_0M_0L_0^{\top}$, $L_0 \in {\mathbb R}^{n\times\alpha}$, $M_0 \in {\mathbb R}^{\alpha\times\alpha}$;
\zi \textsc{Filter Recursion}: ($k=\overline{0,N}$)
\li \>$R_{e,k+1}^{\lambda}  = R_{e,k} +\lambda H L_{k} M_{k}L_{k}^{\top} H^{\top}$;
\li \>$K_{p,k+1}  = \left[K_{p,k}R_{e,k}^{\lambda} + FL_{k} M_{k}L_{k}^{\top}H^{\top}\right] \left[R_{e,k+1}^{\lambda} \right]^{-1}$;
\li \>$L_{k+1}  = (F-\lambda K_{p,k+1}H\Bigr)L_{k}$;
\li \>$M_{k+1}  = M_{k} + \lambda  M_{k}L_{k}^{\top} H^{\top} [R_{e,k}^{\lambda}]^{-1} H L_{k}M_{k}$;
\li \>$\hat x_{k+1|k}  =   F \hat x_{k|k-1} + \lambda K_{p,k}(y_k - H \hat x_{k|k-1})$.
\end{codebox}

It is worth noting here that the error covariance matrix $P_{k+1|k}$ is simply recovered from the propagated factors $L_k \in {\mathbb R}^{n\times\alpha}$ and $M_k \in {\mathbb R}^{\alpha \times\alpha}$ of the matrix $\Diff{P_{k+1|k}} = P_{k+1|k}- P_{k|k-1}$ at any time instance, if necessary:
\[P_{k+1|k} = P_{k|k-1} + L_k M_k L_k^{\top} = \Pi_0 + \sum \limits_{j=0}^{k} L_j M_j L_j^{\top}.\]

Alternatively, the Chandrasekhar recursion in~\eqref{lemma:2} might be utilized for designing the related IMCC-KF implementation as follows:
\begin{align*}
\Diff{P_{k+1|k}}  & = L_kM_kL_k^{\top} =  \Bigl(F-\lambda K_{p,k-1}H\Bigr)L_{k-1}   \\
 & \phantom{=} \times \Bigl(M_{k-1} -\lambda M_{k-1}L_{k-1}^{\top}H^{\top} [R_{e,k}^{\lambda}]^{-1} HL_{k-1}M_{k-1}\Bigr) \\
 & \phantom{=} \times L_{k-1}^{\top}\Bigl(F-\lambda K_{p,k-1}H\Bigr)^{\top},
\end{align*}
i.e. we conclude
\begin{align}
L_k & := \Bigl(F-\lambda K_{p,k-1}H\Bigr)L_{k-1}, \label{fast2:Lk} \\
M_k & := M_{k-1} - \lambda M_{k-1}L_{k-1}^{\top}H^{\top} [R_{e,k}^{\lambda}]^{-1} HL_{k-1}M_{k-1}. \label{fast2:Mk}
\end{align}

Having summarized equations~\eqref{fast1:Rek}~-- \eqref{fast2:Mk} with~\eqref{MCCKF:X:Riccati1}, a new Chandrasekhar-based IMCC-KF implementation is designed.
\begin{codebox}
\Procname{{\bf Algorithm 2}. $\proc{Chandrasekhar IMCC-KF}$ based on recursion~\eqref{lemma:2}}
\zi \textsc{Initialization:}($k=0$)
\li \>Set $x_{0|-1} = \bar x_0$, $P_{0|-1}=\Pi_0$;
\li \>Compute $R_{e,0}^{\lambda} = R +\lambda H\Pi_0H^{\top}$, $K_{p,0} = F\Pi_0 H^{\top}[R_{e,0}^{\lambda}]^{-1}$;
\li \>Find $\Diff{P_{1|0}} \!= \!F\Pi_0F^{\top}\!+\!GQG^{\top}\! - \lambda K_{p,0}R_{e,0}^{\lambda}K_{p,0}^{\top}\!  - \!\Pi_0$;
\li \>Factorize $\Diff{P_{1|0}} = L_0M_0L_0^{\top}$, $L_0 \in {\mathbb R}^{n\times\alpha}$, $M_0 \in {\mathbb R}^{\alpha\times\alpha}$;
\zi \textsc{Filter Recursion}: ($k=\overline{0,N}$)
\li \>$R_{e,k+1}^{\lambda}  = R_{e,k} +\lambda H L_{k} M_{k}L_{k}^{\top} H^{\top}$;
\li \>$K_{p,k+1}  = \left[K_{p,k}R_{e,k}^{\lambda} + FL_{k} M_{k}L_{k}^{\top}H^{\top}\right] \left[R_{e,k+1}^{\lambda} \right]^{-1}$;
\li \>$L_{k+1}  = (F-\lambda K_{p,k}H)L_{k}$;
\li \>$M_{k+1}  = M_{k} - \lambda  M_{k}L_{k}^{\top} H^{\top} [R_{e,k+1}^{\lambda}]^{-1} H L_{k}M_{k}$;
\li \>$\hat x_{k+1|k}  =   F \hat x_{k|k-1} + \lambda K_{p,k}(y_k - H \hat x_{k|k-1})$.
\end{codebox}

Having compared equations~\eqref{fast1:Lk} and~\eqref{fast2:Lk} for computing $L_k$ factor, we observe a difference between Algorithms~1 and~2. In fact,  equation~\eqref{fast1:Lk} of Algorithm~1 implies calculation of $L_k$ through $K_{p,k}$ value obtained at the same filtering step $t_k$. Meanwhile equation~\eqref{fast2:Lk} of Algorithm~2 computes $L_k$ by using $K_{p,k-1}$, i.e. by using the value from the previous filtering step $t_{k-1}$. Besides, a similar difference in calculating $M_k$ factor between Algorithms~1 and~2 is observed from equations~\eqref{fast1:Mk} and~\eqref{fast2:Mk}. Indeed, Algorithm~1 implies calculation of $M_k$ through the previous step value $R_{e,k-1}$, meanwhile equation~\eqref{fast2:Lk} of Algorithm~2 finds $M_k$ by using the current value $R_{e,k}$. Finally, we note that each filtering step of Algorithm~1 require two $m\times m$ matrix inversions, i.e. the terms $\left[R_{e,k}^{\lambda} \right]^{-1}$ and $\left[R_{e,k+1}^{\lambda} \right]^{-1}$. In contrast, Algorithm~2 involves only $\left[R_{e,k+1}^{\lambda} \right]^{-1}$. However, a careful implementation of Algorithm~1 suggests to save the previously obtained $\left[R_{e,k}^{\lambda} \right]^{-1}$ value and, hence, to reduce the number of matrix inversions up to one operation. In this case, the computational complexity of Algorithms~1 and~2 are the same, i.e. the same number of flops are required.
Indeed, Algorithms~1 and~2 differ by the way of calculating $L_k$ and $M_k$ factors, only. Besides, they are computed at a similar ways. Hence, a careful implementation of Algorithm~1 yields the same number of flops.

Next, we consider Algorithm~2 and derive a few more implementations based on recursion~\eqref{lemma:2}. The same can be done with respect to recursion~\eqref{lemma:1}. First, we suggest a variant of Algorithm~2 that requires the inversion of $\alpha$-by-$\alpha$ matrix instead of $R_{e,k+1}$ of size $m$ involved in Algorithm~2. Such implementation might be preferable for practical use, especially when $\alpha << m$, where $m$ is the size of measurement vector $y_k$ in the state-space model. Having applied the matrix inversion lemma (Sherman-Morrison-Woodbury formula) to the recursion in~\eqref{fast1:Rek}, we get
\begin{align*}
[R_{e,k+1}^{\lambda}]^{-1} & =  [R_{e,k}^{\lambda} + \lambda HL_kM_kL_k^{\top}H^{\top}]^{-1} = [R_{e,k}^{\lambda}]^{-1}   \\
& - \lambda [R_{e,k}^{\lambda}]^{-1}HL_k \Bigl(M_k^{-1} + \lambda L_k^{\top}H^{\top}[R_{e,k}^{\lambda}]^{-1}HL_k \Bigr)^{-1} \\
& \times L_k^{\top}H^{\top}[R_{e,k}^{\lambda}]^{-1}.
\end{align*}
Furthermore,  taking into account that
\begin{align*}
& \Bigl(M_k^{-1} + \lambda L_k^{\top}H^{\top}[R_{e,k}^{\lambda}]^{-1}HL_k \Bigr)^{-1} \\
& = M_k - \lambda M_k L_k^{\top}H^{\top} (R_{e,k}^{\lambda} + \lambda HL_k M_k L_k^{\top}H^{\top})^{-1} HL_k M_k \\
& = M_k - \lambda M_k L_k^{\top}H^{\top} (R_{e,k+1}^{\lambda})^{-1} HL_k M_k = M_{k+1},
\end{align*}
we conclude that if $M_k$ is updated through~\eqref{fast2:Mk}, then
\begin{align*}
[R_{e,k+1}^{\lambda}]^{-1} & =  [R_{e,k}^{\lambda}]^{-1}  - \lambda [R_{e,k}^{\lambda}]^{-1}HL_k M_{k+1} L_k^{\top}H^{\top}[R_{e,k}^{\lambda}]^{-1}.
\end{align*}

Having summarized the formulas above, we formulate Algorithm~3, which is mathematically equivalent to Algorithm~2 and through Lemma~1 it is equivalent to Algorithm~1 as well.
\begin{codebox}
\Procname{{\bf Algorithm 3}. $\proc{Chandrasekhar IMCC-KF}$ based on recursion~\eqref{lemma:2}}
\zi \textsc{Initialization:}($k=0$)
\li \>Set $x_{0|-1} = \bar x_0$, $P_{0|-1}=\Pi_0$;
\li \>Compute $R_{e,0}^{\lambda} = R +\lambda H\Pi_0H^{\top}$, $K_{0} = F\Pi_0 H^{\top}$;
\li \>Find $\Diff{P_{1|0}} \!= \!F\Pi_0F^{\top}\!+\!GQG^{\top}\! -\lambda K_{0}[R_{e,0}^{\lambda}]^{-1}K_{0}^{\top}\!  - \!\Pi_0$;
\li \>Factorize $\Diff{P_{1|0}} = L_0M_0L_0^{\top}$, $L_0 \in {\mathbb R}^{n\times\alpha}$, $M_0 \in {\mathbb R}^{\alpha\times\alpha}$;
\zi \textsc{Filter Recursion}: ($k=\overline{0,N}$)
\li \>$L_{k+1}  = (F-\lambda K_{k}[R_{e,k}^{\lambda}]^{-1}H\Bigr)L_{k}$;
\li \>$M_{k+1}^{-1}  = M_k^{-1} + \lambda L_k^{\top}H^{\top}[R_{e,k}^{\lambda}]^{-1}HL_k$;
\li \>$[R_{e,k+1}^{\lambda}]^{-1}  =  [R_{e,k}^{\lambda}]^{-1}$ \label{line:3:M}
\zi \>$\phantom{[R_{e,k+1}^{\lambda}]^{-1}  = }- \lambda [R_{e,k}^{\lambda}]^{-1}HL_k M_{k+1} L_k^{\top}H^{\top}[R_{e,k}^{\lambda}]^{-1}$;
\li \>$K_{k+1}  = K_{k} + FL_{k} M_{k}L_{k}^{\top}H^{\top}$;
\li \>$\hat x_{k+1|k}  =   F \hat x_{k|k-1} + \lambda K_{k}[R_{e,k}^{\lambda}]^{-1}(y_k - H \hat x_{k|k-1})$.
\end{codebox}

Having analyzed the implementation above, we conclude that Algorithm~3 propagates the inverse matrices $M_{k}^{-1}$ and $[R_{e,k}^{\lambda}]^{-1}$. However, the value $M_k$ is required in line~\ref{line:3:M} of Algorithm~3, i.e. it still demands the matrix inversion operation, but the matrix to be inverted is of size $\alpha$. In other words, if $\alpha << m$, then Algorithm~3 is preferable for practical use from the computational complexity point of view. Meanwhile, the numerical stability issues of Algorithms~1~-- 3 should be also taken into account  when one decides between Algorithms~1, 2 and~3. The numerical behaviour depends on properties of the matrices to be inverted, i.e. on the condition numbers of $R_{e,k}$ in Algorithms~1, 2 and $M_k$ in Algorithm~3.

Finally, we may suggest a symmetric implementation based on Chandrasekhar recursion~\eqref{lemma:2} by re-formulating Algorithm~3.

\begin{codebox}
\Procname{{\bf Algorithm 4}. $\proc{Chandrasekhar IMCC-KF}$ based on recursion~\eqref{lemma:2}}
\zi \textsc{Initialization:}($k=0$)
\li \>Set $x_{0|-1} = \bar x_0$, $P_{0|-1}=\Pi_0$;
\li \>Compute $R_{e,0}^{\lambda} = R +\lambda H\Pi_0H^{\top}$, $K_{p,0} = F\Pi_0 H^{\top}[R_{e,0}^{\lambda}]^{-1}$;
\li \>Find $\Diff{P_{1|0}} \!= \!F\Pi_0F^{\top}\!+\!GQG^{\top}\! - \lambda K_{p,0}R_{e,0}^{\lambda}K_{p,0}^{\top}\!  - \!\Pi_0$;
\li \>Factorize $\Diff{P_{1|0}} = L_0M_0L_0^{\top}$, $L_0 \in {\mathbb R}^{n\times\alpha}$, $M_0 \in {\mathbb R}^{\alpha\times\alpha}$;
\zi \textsc{Filter Recursion}: ($k=\overline{0,N}$)
\li \>$R_{e,k+1}^{\lambda} =  R_{e,k} +\lambda H L_{k} M_{k}L_{k}^{\top} H^{\top}$; \label{alg4:Rek}
\li \>$M_{k+1}^{-1}  = M_k^{-1} + \lambda L_k^{\top}H^{\top}[R_{e,k}^{\lambda}]^{-1}HL_k$;  \label{alg4:Mk}
\li \>$K_{p,k+1}  =  \left[K_{p,k}R_{e,k}^{\lambda} + FL_{k} M_kL_{k}^{\top}H^{\top}\right] \left[R_{e,k+1}^{\lambda} \right]^{-1}$;
\li \>$L_{k+1}  = (F-\lambda K_{p,k}H\Bigr)L_{k}$;
\li \>$\hat x_{k+1|k}  =   F \hat x_{k|k-1} + \lambda K_{p,k}(y_k - H \hat x_{k|k-1})$.
\end{codebox}

From the numerical stability and computational complexity reasons, Algorithm~4 seems to be the worst implementation, although the formulas in lines~\ref{alg4:Rek}, \ref{alg4:Mk} have a symmetric form. Indeed, Algorithm~4 involves both the $m \times m$ and $\alpha \times \alpha$ matrix inversions in each iterate.

Finally, the required low-rank approximation for $\Diff{P_{1|0}}$ and the displacement rank $\alpha$ are discussed.
In contrast to a symmetric positive (semi-) definite error covariance matrix $P_{k|k-1}$ involved in the Riccati recursion, the matrix $\Diff{P_{k|k-1}}$ is a symmetric indefinite matrix. Following~\cite[Chapter~13]{2000:Kailath:book}, the required factorization $\Diff{P_{1|0}} = L_0M_0L_0^{\top}$, $L_0 \in {\mathbb R}^{n \times \alpha}$, $M_0 \in {\mathbb R}^{\alpha \times \alpha}$ can be performed in various ways; e.g. by using Bunch-Kaufman algorithm~\cite{1977:Bunch,1998:Ashcraft}. More precisely, the accurate Bunch-Kaufman method from~\cite{2002:Higham:book} with corresponding MATLAB routine \verb"ldl" is utilized for implementing Algorithms~1--4. Given a symmetric indefinite matrix $A \in {\mathbb R}^{n \times n}$, it performs factorization $PAP^{\top} = LDL^{\top}$ with  a permutation
$P \in {\mathbb R}^{n \times n}$, a unit lower triangular $L \in {\mathbb R}^{n \times n}$, and a real block diagonal $D \in {\mathbb R}^{n \times n}$. Thus, the displacement rank $\alpha$ equals the rank of the resulted matrix $D$. We stress that it is defined automatically and it heavily depends on initial value $P_0$ and the problem statement. For instance, if $\Pi_0 = 0$, then $K_{p,0} = 0$, $R_{e,0}^{\lambda} = R$ and $\Diff{P_{1|0}} = GQG^{\top}$, i.e. one may set $L_0:= G \in {\mathbb R}^{n \times q}$ and $M_0:= Q \in {\mathbb R}^{q \times q}$, i.e. $\alpha:=q$. In practice, the process covariance matrix is often a diagonal matrix with a few non-zero diagonal elements (that is $q$) and, hence, in this case $\alpha << n$. In general, the displacement rank satisfies $\alpha \le n$. For any initial value $\Pi_0$, the value of $\alpha$ is defined and, next, the resulted block diagonal matrix $D$ of size $n$ is approximated by its part of size $\alpha$ with corresponding non-zero (block) diagonal elements, i.e. we set $M_0:= [D]_{\alpha}  \in {\mathbb R}^{\alpha\times \alpha}$. Finally, the product $P^{\top}L$ is computed  and, next, the columns that correspond to non-zero elements in $D$ are pulled out to get an approximation $L_0:=[P^{\top}L] \in {\mathbb R}^{n\times \alpha}$. It should be stressed that the required factorization is performed only once, i.e. at the initial filtering step and, next, all fast Morf-Sidhu-Kailath-Sayed implementations propagate the resulted factors $L_k \in {\mathbb R}^{n\times\alpha} $ and $M_k \in {\mathbb R}^{\alpha \times\alpha} $ according to the underlying Chandrasekhar recursions.

\begin{table*}
\caption{The RMSE, the average CPU time (s) and runtime benefit of the Chandrasekhar methods over the Riccati filtering in Example~\ref{ex:1}.} \label{tab:numeric}
\centering
{\small
\begin{tabular}{ll||cccc|c||cccc|c|cc}
\hline
 & & \multicolumn{5}{c||}{$n=4$, $\Pi_0 = diag\{[1, 1, 1, 10^{-2}]\}$, $\alpha =4$} &
 \multicolumn{7}{c}{$n=4$, $\Pi_0 = 0$, $\alpha =1$} \\
\cline{3-14}
Ratio & KF and IMCC-KF & \multicolumn{5}{c||}{Root mean square errors $\mbox{RMSE}_{x_i}$} &
\multicolumn{5}{c|}{Root mean square errors $\mbox{RMSE}_{x_i}$} & \multicolumn{2}{c}{CPU time}  \\
\cline{3-14}
\cline{3-14}
$q_4/R$ & implementations  & $x_1$ & $x_2$ & $x_3$ & $x_4$ & $\|\cdot\|_2$
                & $x_1$ & $x_2$ & $x_3$ & $x_4$ & $\|\cdot\|_2$ &  (s.) & (\%)  \\
\hline
$0.63 \cdot 10^{-2}$ & Classical Riccati KF
            & 80.95 & 1.77   &  0.42 &	0.92  &	80.97
            & 78.37 & 2.07   &  0.00 &  0.91  & 78.40 &  0.0159 & NA   \\
\cline{2-14}
& IMCC-KF Riccati~\eqref{MCCKF:X:Riccati1}-\eqref{MCCKF:P:Riccati1}
            & 80.90 &  1.95  &  0.42 &	  0.93 &    80.93
            & 77.69 &  2.34  &  0.00 &	  0.91 &    77.73 &  0.0212 & -   \\
& IMCC-KF Algorithm~1
            & 80.90 &  1.95  &  0.42 &	  0.93 &    80.93
            & 77.69 &  2.34  &  0.00  &	  0.91 &    77.73 &  0.0203 & 4.4   \\
& IMCC-KF Algorithm~2
            & 80.90 &  1.95  &  0.42 &	  0.93 &    80.93
            & 77.69 &  2.34  &  0.00  &	  0.91 &    77.73 &  0.0202 & 5.2   \\
& IMCC-KF Algorithm~3
            & 80.90 &  1.95  &  0.42 &	  0.93 &    80.93
            & 77.69 &  2.34  &  0.00  &	  0.91 &    77.73 &  0.0205 & 3.7   \\
& IMCC-KF Algorithm~4
            & 80.90 &  1.95  &  0.42 &	  0.93 &    80.93
            & 77.69 &  2.34  &  0.00  &	  0.91 &    77.73 &  0.0206 & 3.2   \\
\hline
\hline
$0.63 \cdot 10^{-4}$ & Classical Riccati KF
            & 81.77 & 4.26   &  0.44 &  0.93  & 81.89
            & 60.75 & 5.93   &  0.00 &  0.92  & 61.05 & 0.0167 & NA   \\
\cline{2-14}
& IMCC-KF Riccati~\eqref{MCCKF:X:Riccati1}-\eqref{MCCKF:P:Riccati1}
            & 81.32 &  3.95  &  0.44  &	  0.93 &	81.42
            & 56.54 &  6.61  &  0.00 &	  0.92 &	56.93 & 0.0218 & -   \\
& IMCC-KF Algorithm~1
            & 81.32 &  3.95  &  0.44  &	  0.93 &	81.42
            & 56.54 &  6.61  &  0.00 &	  0.92 &	56.93 & 0.0210 & 3.6   \\
& IMCC-KF Algorithm~2
            & 81.32 &  3.95  &  0.44  &	  0.93 &	81.42
            & 56.54 &  6.61  &  0.00 &	  0.92 &	56.93 & 0.0206 & 6.0   \\
& IMCC-KF Algorithm~3
            & 81.32 &  3.95  &  0.44  &	  0.93 &	81.42
            & 56.54 &  6.61  &  0.00 &	  0.92 &	56.93  & 0.0211 & 3.5   \\
& IMCC-KF Algorithm~4
            & 81.32 &  3.95  &  0.44  &	  0.93 &	81.42
            & 56.54 &  6.61  &  0.00 &	  0.92 &	56.93  & 0.0214 & 1.9   \\
\hline
\end{tabular}
}
\end{table*}

\section{Numerical Experiments}

The goal is to justify the theoretical derivation of the newly suggested Chandrasekhar-based MCC-KF estimators in Section~\ref{sec:derivation}.

\begin{exmp} \label{ex:1}
 The dynamic  of the in-track motion  of a satellite traveling in a circular orbit is given as follows~\cite[p.~1448]{1965:Rauch}:
\begin{align*}
x_{k+1} & =
\begin{bmatrix}
1 & 1 & 0.5 &  0.5 \\
0 & 1 & 1 & 1 \\
0 & 0 & 1 & 0 \\
0 & 0 & 0 & 0.606
\end{bmatrix}
x_{k}\! +\! w_{k},
Q
=
\begin{bmatrix}
0 & 0 & 0 & 0 \\
0 & 0 & 0 & 0 \\
0 & 0 & 0 & 0 \\
0 & 0 & 0 & q_{4}
\end{bmatrix},
 \\
y_k & =
\begin{bmatrix}
1 & 0 & 0 & 0
\end{bmatrix}
x_k + v_k, \quad R = 1,
\end{align*}
with zero-mean initial state and $\Pi_0 = diag\{[1, 1, 1, 10^{-2}]\}$. In~\cite{1965:Rauch}, two cases are examined: (i) $q_4=0.63 \cdot 10^{-2}$, and (ii)  $q_4=0.63 \cdot 10^{-4}$.
\end{exmp}

The original MCC-KF estimator has been derived for dealing with the impulsive noise case in~\cite{2016:Izanloo}. Here, we follow the same experimental conditions as in the cited paper, i.e.
\begin{align*}
w_k  & \sim {\cal N}(0, Q)+\mbox{\tt Shot noise}, \\
v_k  & \sim {\cal N}(0, R)+\mbox{\tt Shot noise}
\end{align*}
where the short noise is generated as follows: (i) only 10\% of samples are corrupted by the outliers; (ii) the discrete time instants $t_k$ corrupted are selected randomly from the uniform discrete distribution in $[21,N-1]$ where the system is simulated for $N=300$ discrete time points; (iii) the magnitude of each impulse is chosen randomly from the uniform discrete distribution in the interval $[0,3]$.

When the stochastic model is simulated for $N=300$, the inverse problem (i.e. the state estimation from the observed signal) is solved by various filtering methods under examination. The root mean square error (RMSE) is calculated over $M=500$ Monte Carlo runs to justify the estimation accuracy. Additionally, the average CPU time (s) is collected for each implementation in Table~\ref{tab:numeric}. The runtime benefit is also computed for Algorithms~1--4 compared to the Riccati-based counterpart as follows: $(CPU_{Chandrasekhar}/CPU_{Riccati}-1)$ and it is expressed in percent. It is worth noting here that the CPU time benefit is remarkable when $\alpha << n$. It is not difficult to see that if $\Pi_0=0$ in Example~1, then $\Diff{P_{1|0}} = GQG^{\top}$ where $G = I_4$ and, hence, $\alpha = 1$ while $n = 4$. For this case we present the CPU time and the computed computational benefit in Table~\ref{tab:numeric}. In general case of $\Pi_0$ utilized in Example~\ref{ex:1}, we do not know in advance the exact value of $\alpha$, because it is defined automatically from the related factorization. However, for readers' convenience, we force Algorithms~1--4 to return this value and summarize the results for various $\Pi_0$ together with the outcomes of the numerical experiments in Table~\ref{tab:numeric}.

Having analyzed the results summarized in Table~\ref{tab:numeric}, we make a few conclusions. First, we observe that the MCC-KF implementations outperform the classical KF for estimation quality in case of impulsive noise examined in this paper. This result was anticipated, because the MCC KF-like estimators are shown to be more robust with respect to outliers than the classical KF methodology in many recent papers. In our experiments, the difference in estimation accuracies between the classical KF and the MCC KF-like methods is modest, because we use the adaptive kernel size selection approach suggested in~\cite{2016:Izanloo} that yields the constant adjusting parameter $\lambda$.This allows for testing the suggested Chandrasekhar implementations. For other adaptive kernel size selection strategies the difference in estimation quality between the classical KF and the MCC KF-like estimators might be more impressive. However, this paper is rather focused on the existence of the Chandrasekhar-type recursion under the MCC methodology.

Having compared the Riccati- and Chandrasekhar-based MCC KF-like implementations, we conclude that they produce the same estimates of the state vector, i.e. their resulted accuracies are the same for any initial value $\Pi_0$ and various signal-to-noise-ratio values under examination. This substantiates the correctness of the theoretical derivation presented in Section~\ref{sec:derivation}. In other words, the mathematical equivalence between the classical Riccati- and the newly suggested Chandrasekhar-based filtering under the MCC-KF methodology is validated in practice.

Finally, having compared the CPU time averaged over Monte Carlo runs, we conclude that the Chandrasekhar-based implementations are faster than their algebraically equivalent Riccati-based counterparts when $\alpha < n$. This is in line with the previous research focused on the Chandrasekhar-based KF implementations in~\cite{1974:Morf,1994:Sayed}. More precisely, from the last column in Table~\ref{tab:numeric} we observe that the Chandrasekhar-based MCC-KF implementations work approximately on $5\%$ faster than the Riccati-based MCC-KF algorithm in case of low-dimensional problem in Example~1, i.e. when $\alpha=1$ and $n=4$.
Thus, we conclude that the newly-developed Chandrasekhar-based MCC-KF implementations provide the same estimation quality to that of the original Riccati-based IMCC-KF algorithm, but at a reduced CPU time. This runtime difference is expected to be significant in case of estimating the large-scale dynamical systems.

\section{Concluding remarks}

In this technical note, the first Chandrasekhar recursion is derived for the maximum correntropy Kalman filtering. The theory is inferred for a case of the adaptive kernel size selection mechanism that yields a constant adjusting parameter. Several Chandrasekhar-based MCC KF-like implementations have been proposed, and the numerical experiments substantiate their practical feasibility and efficiency. The derivation of the Chandrasekhar-type recursion for a general case of a time-variant adjusting weight in the MCC-KF estimators is rather complicated. This is an area for a future research. Another open question is a design of the Kalman-like nonlinear filtering methods under the MCC methodology; e.g. the MCC-EKF is planned for a future work based on the results in~\cite{2016:Kulikov:SISCI,2017:Kulikov:ANM,2017:Kulikov:SP}.


\end{document}